\newtheorem{theorem}{Theorem}
\newtheorem{problem}{Problem}
\newtheorem{definition}{Definition}
\begin{document}

\title{Identifying an acoustic source in a two-layered medium from multi-frequency phased or phaseless far-field patterns}

\author{Yan Chang\thanks{School of Mathematics, Harbin Institute of Technology, Harbin, P. R. China. ({21B312002@stu.hit.edu.cn}).},
            Yukun Guo\thanks{School of Mathematics, Harbin Institute of Technology, Harbin, P. R. China. ({ykguo@hit.edu.cn}).},
	  and Yue Zhao\thanks{School of Mathematics and Statistics, and Key Lab NAA–MOE, Central China Normal University, Wuhan, P. R. China. ({zhaoyueccnu@163.com}).}}
	  
\date{}

\maketitle

\begin{abstract}
This paper presents a method for reconstructing an acoustic source located in a two-layered medium from multi-frequency phased or phaseless far-field patterns measured on the upper hemisphere.  The interface between the two media is assumed to be flat and infinite, while the source is buried in the lower half-space. In the phased case, a Fourier method is proposed to identify the source based on far-field measurements.  This method assumes that the source is compactly supported and can be represented by a sum of Fourier basis functions.  By utilizing the far-field patterns at different frequencies, the Fourier coefficients of the source can be determined, allowing for its reconstruction. For the case where phase information is unavailable, a phase retrieval formula is developed to retrieve the phase information.  This formula exploits the fact that the far-field patterns are related to the source through a linear operator that preserves phase information. By developing a suitable phase retrieval algorithm, the phase information can be recovered. Once the phase is retrieved, the Fourier method can be adopted to recover the source function. Numerical experiments in two and three dimensions are conducted to validate the performance of the proposed methods.
\end{abstract}

{\bf Keywords}\ Inverse source problem, two-layered medium, far-field, multi-frequency, phaseless, Fourier method


\section{Introduction}
 
 The field of inverse scattering problems, which gains momentum from significant scientific and industrial applications, has witnessed remarkable growth in recent years \cite{Bao2015, CK18, CK19}. 
Being an essential topic in the field of inverse problems, the research background for the layered medium is rooted in the observation that numerous natural and artificial materials possess a two-layered structure.  For example, the skin has a dermis and an epidermis \cite{gongcheng1}, and printed circuit boards have a copper layer and a dielectric layer \cite{gongcheng2}.  In addition, many technical processes involve using two-layered media, such as high-speed printing \cite{gongcheng3}, high-precision cutting, and thin-film solar cells.  The exploration of two-layered media is crucial for comprehending their physical attributes and enhancing their performance across various applications. However, compared with single-layered media, the study of two-layered media is more challenging due to the intricate interactions between the two layers.  Furthermore, reflection, refraction, and transmission of waves occur at these interfaces, which complicates the analysis and computation of the wave propagation in a layered medium.

The research background for the layered medium is extensive and intertwines multiple disciplines, including materials science, physics, chemistry, and engineering. Mathematically, the study of two-layered media has garnered enduring attention. The authors of \cite{ZX, ZX2} have established increasing stability results for the inverse source problem (ISP) in the one-dimensional Helmholtz equation in a multi-layered medium. In \cite{HXYZ23}, the authors have developed increasing stability estimates for the inverse source scattering problem in two-dimensional space and introduced a recursive Kaczmarz-Landweber iteration scheme with incomplete data. To identify point sources from multi-frequency sparse far-field patterns, a multi-step numerical scheme is proposed in \cite{LS}. For a deeper understanding of the direct and inverse scattering problems in a two-layered medium, we refer to \cite{Liu17, Liu6, LLLL, LZ10, YL18} and the associated reference materials.

Although the ISP in two-layered media has been the focus of research interest thus far, most existing methods are either sampling-based (direct but qualitative) or iterative-type (quantitative but time-consuming). For the sake of rapid reconstruction with high precision, it is necessary to investigate novel imaging algorithms.

In this work, we aim to reconstruct the source function with compact support in a two-layered medium by utilizing the multi-frequency far-field data collected on the upper half-space.
The problem under consideration is more sophisticated than that in an unbounded space. A reason accounting for this is that the medium consists of two distinct layers, each with its own specific properties, and the propagation of waves through this medium is influenced by both layers.  Additionally, the availability of far-field models is limited to the upper half space, forming a major challenge due to the practical physical model. Moreover, another obstruction to solving the problem lies in the fact that the observation aperture is closely related to the refractive index (defined as the ratio of the speed above and below the interface). Thus, when the refractive index decreases, the aperture shrinks sharply. It is well known that the small observation aperture will significantly increase the ill-posedness. All the aforementioned difficulties constitute the challenges of the underlying problem.

Motivated by the Fourier method proposed for the ISP in the full space for the Helmholtz equation \cite{Wang2023, Wang2017} and the biharmonic equation \cite{CGYZ24}, we shall investigate the feasibility of extending this method to the two-layer media case. A notable distinction to previous full-space problems is that the far-field measurements in our two-layered model are only accessible in the upper half-space. A computational advantage of the Fourier method is that we can directly determine the Fourier coefficients from the far-field data without prior knowledge of the source function. Therefore, it is crucial to establish a correspondence between the admissible wavenumbers and the Fourier coefficients. Nevertheless, we highlight that the distinct sound speeds above and below the interface result in different wavenumbers, which may render the existing Fourier method inapplicable. Hence, extending the full-space approach to its two-layer version is non-trivial, thus novel modifications deserve investigation. 

In many practical applications, only the intensity information of the radiated field can be measured, whereas the phase information is difficult to access or completely unavailable. 
To address this issue, we introduce an easy-to-implement phase retrieval technique by incorporating auxiliary known reference source points into the inverse source system. This technique allows us to recover the phase information, thereby reformulating the phaseless problem into the standard ISP with phase information.
Compared to the full-space case, a notable difference is that the configuration of reference source points plays an important role in the specific details of phase recovery. In a nutshell, although the deployment of the auxiliary point in the lower space offers computational convenience, it comes at the expense of practical engineering costs. In comparison, it is more convenient and realistic to place the auxiliary point sources in the upper half-space, but this naturally requires an immense endeavor to derive novel computational schemes. 

Once the phase information is retrieved, the phaseless inverse source problem (PLISP) can be easily reduced to its phased version, which can be solved using the Fourier method introduced here. Instead of delving into mathematical intricacies such as the stability and solvability of the equation system in the phase-retrieval process, we shall focus on feasible implementations and demonstrate its effectiveness through extensive numerical results conducted in two and three dimensions.

The remaining part of the work is organized as follows: In the next section, we present the forward and inverse source model with a two-layer media. The Fourier method for solving the phased problem is given in \Cref{sec: Fourier}. Then, the phase retrieval algorithm for the phaseless inverse source problem is discussed in \Cref{sec: PR}. Next, several numerical examples are provided in \Cref{sec: example} to illustrate the effectiveness of the proposed methods, and the conclusions follow in \Cref{sec: conclusion}.

\section{Problem setup}\label{sec: problem_setup}

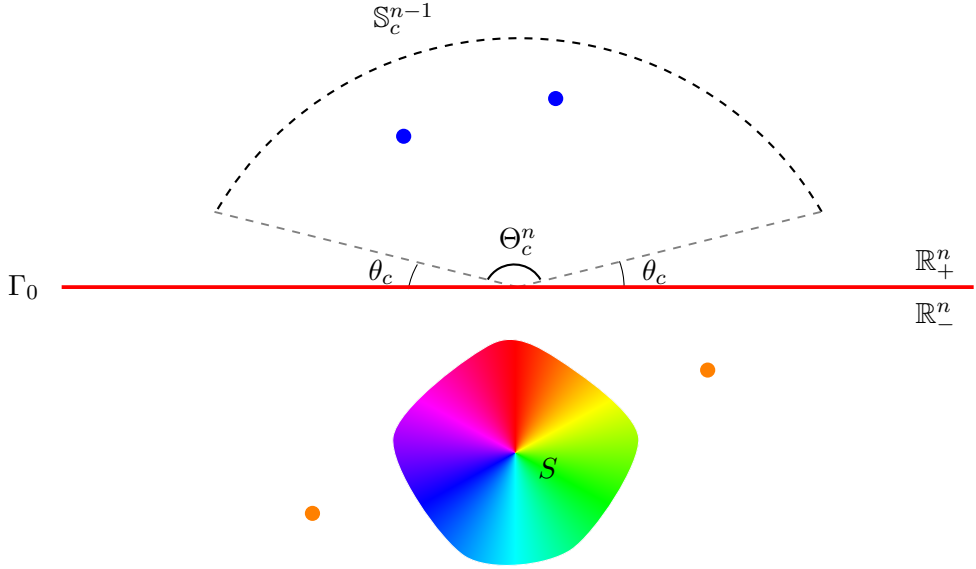
\begin{figure}[h]
	\centering
	\usepgflibrary{shadings}
	\begin{tikzpicture}[radius=1cm, delta angle=20]
		\draw[dashed, black,thick] (4,3) arc(30:150:{4.6}); 
		\draw[black,thick] (.3,2.1) arc(30:150:{.4}); 
		\draw node at(-6.5,2) {$\Gamma_0$};
		\draw node at (1.8,2.2) {$\theta_c$};
		\draw node at (-1.8,2.2) {$\theta_c$};
		\draw (.8,2) +(0:.6cm) arc [start angle=0];
		\draw (-0.8,2) +(150:.6cm) arc [start angle=150];
		\draw node at (0,2.6) {$\Theta_c^n$};
		\draw node at (-1.5,5.56) {$\mathbb{S}_c^{n-1}$};
		\draw[dashed, gray, line width=0.25mm] (0,2) -- (4,3); 
		\draw[dashed, gray, line width=0.25mm] (0,2) -- (-4,3); 
		\pgfmathsetseed{47}			
		\filldraw[shading=color wheel,draw=white] plot [smooth cycle, samples=6, domain={1:6}] (\x*360/6-4*rnd:-2.1cm+2cm*rnd);
		\draw node at (.4, -.4) {$S$};
		\fill [blue] (.5,4.5) {circle (0.1)};
		\fill [blue] (-1.5,4) circle (0.1);
		\fill [orange] (-2.7,-1) circle (0.1);
		\fill [orange] (2.5,.9) circle (0.1);
		\draw node at (5.5,1.6) {$\mathbb{R}_-^n$};
		\draw node at (5.5,2.3) {$\mathbb{R}_+^n$};
		\draw [red, line width=0.5mm] (-6,2) -- (6,2); 
	\end{tikzpicture}
	\caption{The source function $S$ is located in the lower half-space.}
		\label{fig:Setup}
\end{figure}

Let us start this section with a mathematical description of the model setup. 
As shown in \Cref{fig:Setup}, the flat plane $\Gamma_0:=\{x\in\mathbb{R}^n:x_n=0\}$ delimits the whole space $\mathbb{R}^n\,(n=2,3)$ into two half-spaces: the upper half space $\mathbb{R}_+^n:=\{x=(x_1, x_2, \cdots, x_n)\in\mathbb{R}^n: x_n>0\}$ and the lower half space $\mathbb{R}_-^n:=\{x\in\mathbb{R}^n: x_n<0\}.$  The unbounded domains $\mathbb{R}_\pm^n$ are filled with homogeneous and isotropic acoustic material with wave speed $c_\pm>0$, respectively. 

Given source function $S$, the radiated field $u$ is governed by the Helmholtz equation
\begin{equation}\label{eq: Helmholtz}
	\Delta u+k^2(x)u=S,\quad x\in\mathbb{R}^n,
\end{equation}
with $k(x)=k_\pm=\omega/c_\pm$ for $x\in\mathbb{R}^n_\pm$ and $\omega>0$ is the wave frequency.  Throughout this article, we assume that $S$ is independent of $k_\pm$ and located in $\mathbb{R}_-^n,$ with $\text{supp}\,S\subset V_0.$ Here, $V_0\subset\mathbb{R}_-^n, n=2,3$ is some square ($n=2$) or cuboidal ($n=3$) domain. Under the assumption that $\Gamma_0$ is flat, the radiated field $u$ is supposed to satisfy the following Sommerfeld radiation condition
\begin{equation}\label{eq: src}
	\lim\limits_{\lvert x \rvert \to\infty}\lvert x \rvert^{\frac{n-1}{2}}\left(
	\frac{\partial u}{\partial\lvert x \rvert}-\mathrm{i}k_\pm u
	\right)=0.
\end{equation}
We want to point out that, for a general geometry setting on $\Gamma_0$, the Sommerfeld radiation condition \eqref{eq: src} may no longer be valid to describe the asymptotic behavior of the radiated wave $u$ as $\lvert x \rvert\to\infty$ (see \cite{YL18}).

In \Cref{fig:Setup}, the observation aperture $\Theta_c^n$ is defined by
$$
	\Theta_c^n:=
	\begin{cases}
		(\theta_c,\pi-\theta_c), & n=2,\\
		\left(\theta_c,{\pi}/{2}\right], & n=3,
	\end{cases}
$$
with the critical angle given by
$$
	\theta_c=
	\begin{cases}
		\arccos\left(\dfrac{c_+}{c_-}\right), &\text{if}\  c_->c_+,\\
		0, &\text{if}\  c_-\le c_+.
	\end{cases}
$$
Correspondingly, the observation directions of interest are defined through:
\begin{align*}
	\mathbb{S}_c^1 & :=\left\{\hat{x}\in\mathbb{S}^1:\hat{x}=(\cos\theta,\sin\theta),\ \theta\in\Theta_c^2\right\}, \\
	\mathbb{S}_c^2 & :=\left\{\hat{x}\in\mathbb{S}^2:\hat{x}=(\cos\phi\cos\theta,\sin\phi\cos\theta,\sin\theta),\ \theta\in\Theta_c^3,\ \phi\in[0,2\pi)\right\},
\end{align*}
 respectively.
For later use, the transmitted direction $\hat{x}^t\in\mathbb{S}^{n-1}\cap\mathbb{R}_+^n$ for $\hat{x}\in\mathbb{S}_c^n$ is denoted as  
\begin{align*}
	\hat{x}^t:=\left\{
	\begin{aligned}
		&\left(\frac{c_-}{c_+}\cos\theta,\sqrt{1-\frac{c_-^2}{c_+^2}\cos^2\theta}\right),&&n=2,\\
		&\left(\frac{c_-}{c_+}\cos\theta\cos\phi,\frac{c_-}{c_+}\cos\theta\sin\phi,\sqrt{1-\frac{c_-^2}{c_+^2}\cos^2\theta}\right),&&n=3.
	\end{aligned}
	\right.
\end{align*}
For $x=(x_1, x_2, \cdots, x_n)\in\mathbb{R}^n(n=2,3),$   we also define $\tilde{x}=(x_1, x_2, \cdots, x_{n-1})\in\mathbb{R}^{n-1}$ and $x^s:=(x_1, x_2, \cdots, -x_n)\in\mathbb{R}^n$. 

It is well known that the solution to \eqref{eq: Helmholtz}--\eqref{eq: src} is given by
\begin{align}u(x, \omega)=-\int_{V_0}\Phi_\omega(x, y)S(y)\mathrm{d}y,\end{align}
where $\Phi_\omega(x,y)$ is the outgoing Green's function to the following transmission problem:
\begin{align}\label{eq: Greeneq}\left\{
	\begin{aligned}
		&\Delta_x\Phi_\omega(x, y)+k(x)^2\Phi_\omega(x,y)=-\delta_y(x), \quad x\in\mathbb{R}_+^n\cup\mathbb{R}_-^n,\\
		&\left[\Phi_\omega(x, y)\right]=\left[\partial_{x_n}\Phi_\omega(x, y)\right]=0, \quad x\in\Gamma_0.
	\end{aligned}
		\right.
\end{align}
with, in each domain, the Rellich-Sommerfeld radiation condition satisfied:
\begin{align*}
	\lim\limits_{r=\lvert x \rvert\to\infty}r^{\frac{n-1}{2}}\left(\partial_r{\Phi}_\omega-\mathrm{i}k_\pm \Phi_\omega\right)=0.
\end{align*}
In \eqref{eq: Greeneq}, $[\,\cdot\,]$ denotes the jump across the interface $\Gamma_0,$ and $\delta_y(x):=\delta(x-y)$ is the Dirac function in $\mathbb{R}^n.$
As $\Gamma_0$ considered here is a flat plane, the Green function has the following explicit expressions \cite{LS, PA17, YL18}:
\begin{align}
	\Phi_\omega(x,y)=\left\{
	\begin{aligned}
		&\frac{1}{2\pi}\int_{\mathbb{R}^{n-1}}\frac{\mathrm{e}^{-\gamma_1x_n+\gamma_2y_n}}{\gamma_1+\gamma_2}\mathrm{e}^{\mathrm{i}\xi\cdot(\tilde{x}-\tilde{y})}\mathrm{d}\xi,&&y\in\mathbb{R}_-^n,\\
		&\frac{1}{2\pi}\int_{\mathbb{R}^{n-1}}
		\frac{\mathrm{e}^{-\gamma_1\lvert x_n-y_n\rvert}+\gamma\mathrm{e}^{-\gamma_1(x_n+y_n)}}{2\gamma_1}
		\mathrm{e}^{\mathrm{i}\xi\cdot(\tilde{x}-\tilde{y})}
		\mathrm{d}\xi,&&y\in\mathbb{R}_-^n,
	\end{aligned}
	\right.
\end{align}
where $\gamma_i:=\left(\lvert\xi\rvert^2-k_i^2\right)^{1/2}$ has non-negative real part and non-positive imaginary part, and
 $\gamma:=(\gamma_1-\gamma_2)/(\gamma_1+\gamma_2)$.
Further, the layered Green's function has the following asymptotic behavior \cite{LS, PA17, YL18}:
\begin{equation}\label{eq: far}
	\Phi_\omega(x, y)=\beta_n\frac{\mathrm{e}^{\mathrm{i}k_+\lvert x \rvert}}{\lvert x \rvert^{\frac{n-1}{2}}}\left\{\Phi_\omega^\infty(\hat{x},y)+o(1)\right\},
\quad\text{as}\quad \vert x\rvert\to\infty,
\end{equation}
which holds uniformly for all directions $\hat{x}\in\mathbb{S}_c^{n-1},$ with
$$
	\beta_n=\frac{\mathrm{e}^{\mathrm{i}\pi/4}}{\sqrt{8k_+\pi}}\left(\mathrm{e}^{-\mathrm{i}\frac{\pi}{4}}\sqrt{\frac{k_+}{2\pi}}\right)^{n-2},
$$
and 
\begin{align}\label{eq:phifar}
\Phi_\omega^\infty(\hat{x},y)=\left\{
\begin{aligned}
	&T(\theta)\mathrm{e}^{-\mathrm{i}k_-\hat{x}^t\cdot y},&&y\in\mathbb{R}_-^n,\\
	&H(\theta)\mathrm{e}^{-\mathrm{i}k_+\hat{x}\cdot y^s}+\mathrm{e}^{-\mathrm{i}k_+\hat{x}\cdot y},&&y\in\mathbb{R}_+^n,
\end{aligned}
\right.
\quad\hat{x}\in\mathbb{S}_c^{n-1}.
\end{align}
Here,
\begin{align*}
	T(\theta)&=\frac{2\sin\theta}{\sin\theta+\sqrt{c_+^2/c_-^2-\cos^2\theta}},\quad\theta\in\Theta_c^n,\\
	H(\theta)&=\frac{\sin\theta-\sqrt{c_+^2/c_-^2-\cos^2\theta}}{\sin\theta+\sqrt{c_+^2/c_-^2-\cos^2\theta}},\quad\theta\in\Theta_c^n.
\end{align*}
Accordingly, for $x\in\mathbb R_+^n,$ $u(x, \omega)$ admits the following asymptotic expansion:
\begin{align}
	u(x, \omega)=\beta_n\frac{\mathrm{e}^{\mathrm{i}k_+\lvert x \rvert}}{\lvert x \rvert^{\frac{n-1}{2}}}\left\{u^\infty(\hat{x}, \omega)+o(1)\right\},
	\quad\text{as}\ \vert x\rvert\to\infty,
\end{align}
where $u^\infty(\hat{x}, \omega)$ defined on the upper half unit sphere/circle $\mathbb S_+^{n-1}$ is known as the far-field pattern (scattering amplitude) with $\hat{x}$ being the observation direction:
\begin{align}\label{eq: uinf}
	u^\infty(\hat{x}, \omega)=T(\theta)\int_{V_0}\mathrm{e}^{-\mathrm{i}k_-\hat{x}^t\cdot y}S(y)\mathrm{d}y.
\end{align}

Let $\Omega_N=\{\omega_j\}_{j=1}^N,\,(N\in\mathbb{N})$ be an admissible set consisting of a finite number of frequencies, and we are now to propose the two inverse problems under consideration as follows:
\begin{problem}[multi-frequency ISP with far-field]\label{problem1}
	Determine the source function $S(x)$ from the multi-frequency far-field data $\{u^\infty(\hat{x}, \omega):\hat{x}\in\mathbb{S}_+^{n-1},\, \omega\in\Omega_N\}.$ 
\end{problem}
\begin{problem}[multi-frequency PLISP]\label{problem2}
	Recover $S(x)$ from the multi-frequency phaseless far-field data $\{\lvert u^\infty(\hat{x}, \omega)\rvert:\hat{x}\in\mathbb{S}_+^{n-1},\, \omega\in\Omega_N\}.$ 
\end{problem}

To tackle \Cref{problem2}, we adopt a two-stage strategy. Initially, we introduce several appropriate auxiliary source points into the inverse system to recover the phase information. Subsequently, we transform \Cref{problem2} into \Cref{problem1}.

Before heading for the inverse problem, we introduce several notations and the relevant Sobolev spaces in the rest of this section. Without loss of generality, we take $a>0,\,L>0$ such that the cubic domain
$$
V_0=\left(-\frac a 2,\frac a 2\right)^{n-1}\times\left(-L,0\right),
$$
contains the support of the source function $S,$ i.e., 
$
\text{supp }S\subset V_0.
$ 
For $\sigma>0,$ the periodic Sobolev space $H^\sigma(V_0)$ is defined by
$$_{}
H^\sigma(V_0)=\{v\in L^2(V_0):\|v\|_\sigma<\infty\},
$$
equipped with the norm
$$
\|v\|_\sigma=\left(\sum_{\vec{l}\in\mathbb{Z}^n}\left(1+{\lvert{\vec{l}}\rvert}^2\right)\lvert\hat{v}_{\vec{l}}\,\rvert^2\right)^{1/2},
$$
where 
	$$
	\hat{v}_{\vec{l}}=\frac{1}{a^n}\int_{V_0}v(x)\overline{\phi_{\vec{l}}\,(x)}\mathrm{d}x
	$$
is the Fourier coefficient and the Fourier basis function $\phi_{\vec{l}}\,(x)$ is given by
$$
\phi_{\vec{l}}\,(x)=\mathrm{e}^{\mathrm{i}\frac{2\pi}{a}\vec{l}\cdot x},\quad\vec{l}=(l_1,l_2,\cdots,l_n)\in\mathbb{Z}^n.
$$
Within this preparation, the main idea of this article is to approximate the source function $S$ by
\begin{align}\label{eq: SN}
	S_N(x)=\sum_{\lvert\vec{l}\rvert_\infty\le N}\hat{s}_{\vec l}\,\phi_{\vec{l}}\,(x),
\end{align}
where $\hat{s}_{\vec l}$ is the Fourier coefficient defined by
\begin{align}\label{eq: Fourier}
\hat{s}_{\vec{l}}=\frac{1}{a^n}\int_{V_0}S(x)\overline{\phi_{\vec{l}}\,(x)}\mathrm{d}x.
\end{align}
For $S_N$ defined by \eqref{eq: SN}, we have the following approximation result (\cite{IP15}):
\begin{theorem}
Let $S$ be a function in $H^\sigma(V_0)\,(\sigma>0),$ then the following estimate holds:
$$
\|S-S_N\|_\mu\le N^{\mu-\sigma}\|S\|_\sigma,\quad0\le\mu\le\sigma.
$$	
\end{theorem}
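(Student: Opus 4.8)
The statement is the standard spectral-truncation estimate for periodic Sobolev spaces, and the plan is to prove it by a direct computation that only uses the orthogonality of the Fourier basis, i.e. Parseval's identity applied in the norms $\|\cdot\|_\mu$ and $\|\cdot\|_\sigma$.

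First I would observe that, by the definitions \eqref{eq: Fourier} of the Fourier coefficients and \eqref{eq: SN} of $S_N$, the truncation error $S-S_N=\sum_{|\vec l|_\infty>N}\hat s_{\vec l}\,\phi_{\vec l}$ has Fourier coefficients equal to $\hat s_{\vec l}$ when $|\vec l|_\infty>N$ and equal to $0$ otherwise. Hence the definition of the $\|\cdot\|_\mu$-norm gives immediately
$$\|S-S_N\|_\mu^2=\sum_{|\vec l|_\infty>N}\bigl(1+|\vec l|^2\bigr)^{\mu}\,|\hat s_{\vec l}|^2 .$$
Next, for every multi-index in this sum we have $|\vec l|_\infty>N$, so $1+|\vec l|^2\ge 1+|\vec l|_\infty^2> N^2$; since $\mu-\sigma\le 0$, the function $t\mapsto t^{\mu-\sigma}$ is non-increasing on $(0,\infty)$, whence $\bigl(1+|\vec l|^2\bigr)^{\mu-\sigma}\le N^{2(\mu-\sigma)}$. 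Writing $\bigl(1+|\vec l|^2\bigr)^{\mu}=\bigl(1+|\vec l|^2\bigr)^{\mu-\sigma}\bigl(1+|\vec l|^2\bigr)^{\sigma}$ and inserting this bound yields
$$\|S-S_N\|_\mu^2\le N^{2(\mu-\sigma)}\sum_{|\vec l|_\infty>N}\bigl(1+|\vec l|^2\bigr)^{\sigma}\,|\hat s_{\vec l}|^2\le N^{2(\mu-\sigma)}\sum_{\vec l\in\mathbb Z^n}\bigl(1+|\vec l|^2\bigr)^{\sigma}\,|\hat s_{\vec l}|^2=N^{2(\mu-\sigma)}\,\|S\|_\sigma^2,$$
where the second inequality merely drops the restriction on the summation index and the final equality is Parseval's identity in $\|\cdot\|_\sigma$ together with the hypothesis $S\in H^\sigma(V_0)$. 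Taking square roots gives the claim, and the borderline case $\mu=\sigma$ is already contained in the same chain (it reduces to $\|S-S_N\|_\sigma\le\|S\|_\sigma$).

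I do not expect a genuine obstacle: the argument is essentially mechanical once Parseval's identity is in place. The only points that call for a little care are (i) applying the sign condition $\mu-\sigma\le 0$ in the correct direction when estimating the factor $\bigl(1+|\vec l|^2\bigr)^{\mu-\sigma}$, (ii) using that the cutoff $N$ is taken to be a positive integer, so that $|\vec l|_\infty>N$ indeed forces $1+|\vec l|^2>N^2$, and (iii) noting at the outset that $S\in H^\sigma(V_0)$ makes all the series above finite, so the term-by-term manipulations and the passage to the full sum over $\mathbb Z^n$ are legitimate.
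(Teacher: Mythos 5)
Your proof is correct and is the standard spectral-truncation argument: Parseval's identity for the $\|\cdot\|_\mu$ norm of the tail, the monotonicity of $t\mapsto t^{\mu-\sigma}$ for $\mu-\sigma\le 0$ applied to the weight $\bigl(1+\lvert\vec l\rvert^2\bigr)$, and the lower bound $1+\lvert\vec l\rvert^2>N^2$ on the tail indices. The paper itself does not prove this theorem --- it is quoted as a known approximation result with a citation to \cite{IP15} --- so there is no in-paper argument to compare against; your write-up supplies exactly the proof one would expect that reference to contain. One cosmetic remark: the paper's displayed definition of $\|v\|_\sigma$ omits the exponent $\sigma$ on the factor $\bigl(1+\lvert\vec l\rvert^2\bigr)$ (evidently a typo), and you have correctly worked with the intended weight $\bigl(1+\lvert\vec l\rvert^2\bigr)^{\sigma}$, without which the stated estimate would not hold.
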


\section{Fourier method for ISP} \label{sec: Fourier}
In this section, we shall develop a Fourier method to deal with \Cref{problem1}. To appropriately utilize the multi-frequencies data, we are supposed to introduce the admissible frequencies.
\begin{definition}[Admissible wave numbers in the lower half-space]\label{def:admissible}
	Let $\lambda>0$ be a sufficiently small constant and 
	\begin{align}
		\vec{l}_0:=\left\{
		\begin{aligned}
			&(\lambda,0),&&n=2,\\
			&(\lambda,0,0),&&n=3.\\
		\end{aligned}
		\right.
	\end{align}
Let
\begin{align}
	\hat{x}^t=\hat{x}_{\vec{l}}=\left\{
	\begin{aligned}
		&\frac{\vec{l}}{\left|\vec{l}\,\right|},\quad\vec{l}\in\mathbb{Z}^n\backslash\{\vec{0}\},\\
		&\frac{\vec{l}_0}{\left|\vec{l}_0\right|},\quad\vec{l}=\vec{0},
	\end{aligned}
	\right.
\end{align}
then the admissible transmitted directions are defined by
\begin{equation}\label{eq: XN}
\mathcal{X}_N=
\left\{
\hat{x}_{\vec{l}}:\theta_{\vec{l}}\in\Theta_c^n,\ \lvert\vec{l}\rvert_\infty\le N
\right\},
\end{equation}
with 
$$
\tan\theta_{\vec{l}}=\left\{
\begin{aligned}
&\frac{l_2}{l_1},&&	n=2,\\
&\frac{l_3}{\sqrt{l_1^2+l_2^2}},&&n=3.
\end{aligned}
\right.
$$
The corresponding admissible wave number in the lower half-space  $\mathbb{R}_-^n $ are defined by
\begin{align}
	\mathbb{K}_{-,N}=\left\{{k_{-,{\vec{l}}}}:\theta_{\vec{l}}\in\Theta_c^n,\,1\le\lvert{\vec{l}}\rvert_\infty\le N\right\}\cup\{k_{-,{\vec{0}}}\}
\end{align}
with
\begin{align}
	{k_{-,{\vec{l}}}}:=\left\{
		\begin{aligned}
		&\frac{2\pi}{a}\lvert\vec{l}\rvert,&& \text{if}\quad\theta_{\vec{l}}\in\Theta_c^n,\,1\le\lvert{\vec{l}}\rvert_\infty\le N,\\
		&\frac{2\pi}{a}\lambda,&& \text{if}\quad{\vec{l}}={\vec 0}.
	\end{aligned}
	\right.
\end{align}
\end{definition}

Moreover, the admissible frequencies set $\Omega_N$, the admissible wave number set $\mathbb{K}_{+,N}$ in the upper half-space $\mathbb{R}_+^n$, and the observation angles  are uniquely determined, i.e.,
$$
	\Omega_N=c_-\mathbb{K}_{-,N},\quad\mathbb{K}_{+,N}=\Omega_N/c_+,\quad\frac{c_-}{c_+}\cos\theta=\frac{l_1}{\lvert{l}\rvert}.
$$

We now deliver the following uniqueness theorem:
\begin{theorem}
	Under \Cref{def:admissible}, the Fourier coefficients $\{\hat{s}_{\vec{l}}\}$ of $S$ in \Cref{eq: SN} can be uniquely determined by the far-field pattern $\{u^\infty(\hat{x}_{\vec{l}}, \omega_{\vec{l}}):\,\theta_{\vec{l}}\in\Theta_c^n\}.$
\end{theorem}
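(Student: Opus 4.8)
The plan is to exploit the explicit formula \eqref{eq: uinf} for the far-field pattern together with the trigonometric identity that links the admissible transmitted direction $\hat{x}_{\vec{l}}$ and the admissible wave number $k_{-,\vec{l}}$ to the Fourier basis functions $\phi_{\vec{l}}$. The key observation is that the product $k_{-,\vec{l}}\,\hat{x}_{\vec{l}}$ should coincide, up to sign, with $\frac{2\pi}{a}\vec{l}$, so that the exponential kernel $\mathrm{e}^{-\mathrm{i}k_{-,\vec{l}}\hat{x}_{\vec{l}}^{\,t}\cdot y}$ appearing in the integral becomes precisely $\overline{\phi_{\vec{l}}\,(y)}$ (or $\phi_{\vec{l}}\,(y)$, depending on the sign convention).

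Concretely, the steps I would carry out are as follows. First, fix $\vec{l}\in\mathbb{Z}^n$ with $1\le\lvert\vec{l}\rvert_\infty\le N$ and $\theta_{\vec{l}}\in\Theta_c^n$. From \Cref{def:admissible} we have $\hat{x}_{\vec{l}}^{\,t}=\hat{x}_{\vec{l}}=\vec{l}/\lvert\vec{l}\rvert$ and $k_{-,\vec{l}}=\frac{2\pi}{a}\lvert\vec{l}\rvert$; multiplying these gives $k_{-,\vec{l}}\,\hat{x}_{\vec{l}}^{\,t}=\frac{2\pi}{a}\vec{l}$. Second, substitute $\hat{x}=\hat{x}_{\vec{l}}$ and $\omega=\omega_{\vec{l}}=c_-k_{-,\vec{l}}$ into \eqref{eq: uinf} to obtain
\begin{align*}
u^\infty(\hat{x}_{\vec{l}}, \omega_{\vec{l}})
=T(\theta_{\vec{l}})\int_{V_0}\mathrm{e}^{-\mathrm{i}\frac{2\pi}{a}\vec{l}\cdot y}S(y)\,\mathrm{d}y
=T(\theta_{\vec{l}})\int_{V_0}S(y)\,\overline{\phi_{\vec{l}}\,(y)}\,\mathrm{d}y
=a^n\,T(\theta_{\vec{l}})\,\hat{s}_{\vec{l}},
\end{align*}
where the last equality is exactly the definition \eqref{eq: Fourier} of the Fourier coefficient. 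Third, check that $T(\theta_{\vec{l}})\neq0$: since $\theta_{\vec{l}}\in\Theta_c^n$ implies $\sin\theta_{\vec{l}}>0$ and the denominator $\sin\theta_{\vec{l}}+\sqrt{c_+^2/c_-^2-\cos^2\theta_{\vec{l}}}$ is strictly positive, the factor $T(\theta_{\vec{l}})$ is a nonzero known constant, so $\hat{s}_{\vec{l}}=u^\infty(\hat{x}_{\vec{l}},\omega_{\vec{l}})/(a^n T(\theta_{\vec{l}}))$ is uniquely determined. Fourth, treat the special index $\vec{l}=\vec{0}$ separately using $\hat{x}_{\vec{0}}=\vec{l}_0/\lvert\vec{l}_0\rvert$ and $k_{-,\vec{0}}=\frac{2\pi}{a}\lambda$, so that $k_{-,\vec{0}}\hat{x}_{\vec{0}}^{\,t}=\frac{2\pi}{a}\vec{l}_0\to\vec{0}$ as $\lambda\to0$; for $\lambda$ small the corresponding exponential is close to the constant $1=\phi_{\vec{0}}$, and one recovers $\hat{s}_{\vec{0}}$ (either exactly in the limit or approximately, which is the reason $\lambda$ is taken "sufficiently small" in the definition).

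The main obstacle is the bookkeeping around the observation-aperture constraint $\theta_{\vec{l}}\in\Theta_c^n$: not every $\vec{l}$ with $\lvert\vec{l}\rvert_\infty\le N$ yields an admissible transmitted direction, because when $c_->c_+$ the critical angle $\theta_c$ is positive and only those $\vec{l}$ whose associated angle $\theta_{\vec{l}}$ exceeds $\theta_c$ correspond to genuinely propagating far-field data. One must therefore verify that the map $\vec{l}\mapsto\hat{x}_{\vec{l}}$ is well defined and that the relation $\frac{c_-}{c_+}\cos\theta=l_1/\lvert\vec{l}\rvert$ stated after \Cref{def:admissible} is consistent, i.e., that the transmitted direction computed from Snell's law matches $\hat{x}_{\vec{l}}$; this is a direct computation using the definition of $\hat{x}^t$ in \Cref{sec: problem_setup} but it is where the two-layer structure genuinely enters and where care is needed. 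A secondary, minor point is the $\vec{l}=\vec{0}$ case, where strict equality $\hat s_{\vec 0}=u^\infty/(a^nT)$ holds only in the $\lambda\to0$ limit, so the uniqueness statement for that single coefficient should be phrased accordingly.
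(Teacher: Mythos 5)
Your proposal is correct and follows essentially the same route as the paper: the identity $k_{-,\vec{l}}\,\hat{x}_{\vec{l}}^{\,t}=\frac{2\pi}{a}\vec{l}$ turns the far-field integral \eqref{eq: uinf} into $a^nT(\theta_{\vec{l}})\,\hat{s}_{\vec{l}}$, which is exactly the paper's formula \eqref{eq: sl}. The only divergence is the $\vec{l}=\vec{0}$ case, where the paper keeps $\lambda$ fixed and derives the explicit correction \eqref{eq: s0} (the factor $\sin(\lambda\pi)/(\lambda\pi)$ together with subtracting the contributions of the nonzero modes) instead of passing to the limit $\lambda\to0$; your remark that this coefficient is only determined approximately matches the paper's own use of $\approx$ there.
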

\begin{proof}
	Let $S$ be the exact source that produces the far field data $\{u^\infty(\hat{x}_{\vec{l}}, \omega_{\vec{l}}):\,\theta_{\vec{l}}\in\Theta_c^n\}.$
	 From the definition of the Fourier coefficients \eqref{eq: Fourier}, we derive that for $\vec{l}\in\mathbb{Z}^n\backslash\left\{\vec{0}\right\},$ 
	 \begin{align*}
	 	\hat{s}_{\vec{l}}&=\frac{1}{a^n}\int_{V_0}S(y)\overline{\phi_{\vec{l}}\,(y)}\mathrm{d}y\\
	 	&=\frac{1}{a^n}\int_{V_0}S(y)\exp\left(-\mathrm{i}\frac{2\pi}{a}\vec{l}\cdot y\right)\mathrm{d}y\\
	 	&=\frac{1}{a^n}\int_{V_0}S(y)\exp\left(-\mathrm{i}\frac{2\pi}{a}\left|\vec{l}\,\right|\frac{\vec{l}\cdot y}{\left|\vec{l}\,\right|}\right)\mathrm{d}y\\
	 	&=\frac{1}{a^n}\int_{V_0}S(y)\exp\left(-\mathrm{i}k_{\vec{l}}\,\hat{x}_{\vec{l}}\cdot y \right)\mathrm{d}y.
	 \end{align*}
 This combined with \eqref{eq: uinf} give rise to
 \begin{align}\label{eq: sl}
 	\hat{s}_{\vec{l}}=\frac{u^\infty(\hat{x}_{\vec{l}}, \omega_{\vec{l}})}{a^nT(\theta)},
 \end{align}
provided that the transmitted direction $\hat{x}^t$ is taken to be $\hat{x}_{\vec{l}}$.

For the case ${\vec{l}}=\vec{0},$ we derive from \eqref{eq: uinf} that
\begin{align*}
	\frac{u^\infty(\hat{x}, \omega_{\vec{0}})}{a^nT(\theta)}&=
	\frac{1}{a^n}\int_{V_0}S(y)\overline{\phi_{\vec{l}_0}(y)}\mathrm{d}y\\
	&=
	\frac{1}{a^n}\int_{V_0}\left(\hat{s}_{\vec{0}}+\sum_{\vec{l}\in\mathbb{Z}^n\backslash\{\vec{0}\}}\hat{s}_{\vec{l}}\,\phi_{\vec{l}}\,(y)\right)\overline{\phi_{\vec{l}_0}(y)}\mathrm{d}y\\
	&=\frac{\hat{s}_{\vec{0}}}{a^n}\int_{V_0}\overline{\phi_{\vec{l}_0}(y)}\mathrm{d}y+\frac{1}{a^n}\sum_{\vec{l}\in\mathbb{Z}^n\backslash\{\vec{0}\}}\hat{s}_{\vec{l}}
	\int_{V_0}\phi_{\vec{l}}\,(y)\overline{\phi_{\vec{l}_0}(y)}\mathrm{d}y\\
	&=\frac{\sin(\lambda\pi)}{\lambda\pi}\hat{s}_{\vec{0}}+\frac{1}{a^n}\sum_{\vec{l}\in\mathbb{Z}^n\backslash\{\vec{0}\}}\hat{s}_{\vec{l}}
	\int_{V_0}\phi_{\vec{l}}\,(y)\overline{\phi_{\vec{l}_0}(y)}\mathrm{d}y,
\end{align*}
which further gives the following formula to compute $\hat{s}_{\vec{0}}$ after truncating the infinite series by a finite order $N\in\mathbb{N}$, explicitly,
\begin{equation}\label{eq: s0}
    \hat{s}_{\vec{0}}\approx\frac{\lambda\pi}{a^n\sin(\lambda\pi)}\left(
    \frac{u^\infty(\hat{x}, \omega_{\vec{0}})}{T(\theta)}-\sum_{1\le\lvert\vec{l}\rvert_\infty\le N}\hat{s}_{\vec{l}}
    \int_{V_0}\phi_{\vec{l}}\,(y)\overline{\phi_{\vec{l}_0}\,(y)}\mathrm{d}y
    \right).
\end{equation}
\end{proof}

Combining \eqref{eq: SN}, \eqref{eq: sl}, and \eqref{eq: s0} gives the truncated Fourier series $S_N$ of the form
\begin{align}
	S_N(x)=\hat{s}_{\vec{0}}+\sum_{1\le\lvert\vec{l}\rvert_\infty\le N}\hat{s}_{\vec{l}}\,\phi_{\vec{l}}\,(x),
\end{align}
which can be viewed as an approximation to the source function. 

In the following, instead of investigating the mathematical justification with regard to the uniqueness and stability of the proposed scheme, we shall head for an intriguing numerical exploration, and further consider the case where the measured data is phaseless.

\section{Phase retrieval for PLISP}\label{sec: PR}

The strategy to reconstruct the source from the phaseless data (\Cref{problem2}) can be divided into two steps: In the first step, we retrieve the phase information by adding several artificial source points to the setup, which transform the PLISP into its phased problem; In the second step, the Fourier method developed in Section \ref{sec: Fourier} can be utilized to produce the final source reconstruction. We focus our concentration on the first step in this section.

Motivated by the phase retrieval technique developed in \cite{ZGLL18, Wang2023, CGZ24}, we shall propose a novel formula to retrieve the phase information by introducing appropriate auxiliary source points to the model setup. Noticing \eqref{eq:phifar}, the locations of the auxiliary artificial source points would have an important influence on this process. 

We now present the method to recover the phase information. For each observation direction $\hat{x}\in\mathbb{S}^{n-1},\,n=2,3,$ we take two points $z_j:=\alpha_j\hat{x}$ with $j=1,2,\,\alpha_j\in\mathbb{R}.$ Then the far-field pattern of the fundamental solution at points $z_j$ is given by
\begin{align}\label{eq:far_zj}
	\Phi_\omega^\infty(\hat{x},z_j)=\left\{
	\begin{aligned}
		&T(\theta)\mathrm{e}^{-\mathrm{i}k_-\hat{x}^t\cdot z_j},&&z_j\in\mathbb{R}_-^n,\\
		&H(\theta)\mathrm{e}^{-\mathrm{i}k_+\hat{x}\cdot z_j^s}+\mathrm{e}^{-\mathrm{i}k_+\hat{x}\cdot z_j},&&z_j\in\mathbb{R}_+^n,
	\end{aligned}
	\right.
	\quad\hat{x}\in\mathbb{S}_c^{n-1}.
\end{align}
By defining the scaling parameters
\begin{align}
	c_j=\frac{\|u^\infty(\cdot, \omega)\|_\infty}{\|\Phi_\omega^\infty(\cdot,z_j)\|_\infty},\quad j=1,2,
\end{align}
with $\|\cdot\|_\infty=\|\cdot\|_{L^\infty(\mathbb{S}^{n-1})}$, the auxiliary function $\phi_j(x, \omega)=-c_j\Phi_\omega(x, z_j)$ satisfies the following inhomogeneous Helmholtz equation
$$
\Delta\phi_j(x;\omega)+k(x)^2\phi_j(x)=c_j\delta_{z_j}(x)\quad\text{in }\ \mathbb{R}^n_+\cup\mathbb{R}^n_-.
$$
By denoting $v_j=u+\phi_j\,(j=1,2)$, we derive that $v_j$ is the unique solution to 
$$
	\begin{cases}
		\left(\Delta +k(x)^2\right)v_j(x, \omega)=\left(S+c_j\right)(x),\ \ {\rm in}\ \mathbb{R}^n,\\
		\lim\limits_{r=\lvert x \rvert\to\infty}r^{\frac{n-1}{2}}\left(\partial_r{v_j}(x, \omega)-\mathrm{i}k_\pm v_j(x, \omega)\right)=0.
	\end{cases}
$$
Furthermore, the far-field pattern of $v_j$ is given by
\begin{align}\nonumber
v_{j}^\infty & =u^\infty+\phi_j^\infty\\
\nonumber & =u^\infty-c_j\Phi_\omega^\infty\\
\label{eq: vj_far} & =u^\infty-c_j
\begin{cases}
	T(\theta)\mathrm{e}^{-\mathrm{i}k_-\hat{x}^t\cdot z_j},&z_j\in\mathbb{R}_-^n,\\
	H(\theta)\mathrm{e}^{-\mathrm{i}k_+\hat{x}\cdot z_j^s}+\mathrm{e}^{-\mathrm{i}k_+\hat{x}\cdot z_j},&z_j\in\mathbb{R}_+^n,
\end{cases}
\quad j=1,2.
\end{align}

We now turn to the following phase retrieval problem:
\begin{problem}[Phase retrieval]
	Let $v_j^\infty$ be the far-field pattern corresponding to the radiated field $v_j,\,j=1,2.$
	Reconstruct the phased radiated data $\{u^\infty(\hat{x};\omega):\hat{x}\in\mathbb{S}_+^{n-1},\omega\in\Omega_N\}$ from the following phaseless data
	\begin{align*}
		&\{\vert u^\infty(\hat{x}, \omega)\rvert:\hat{x}\in\mathbb{S}_+^{n-1},\ \omega\in\Omega_N\},\\
		&\{\vert v_j^\infty(\hat{x}, \omega)\rvert:\hat{x}\in\mathbb{S}_+^{n-1},\ \omega\in\Omega_N\},
		\quad j = 1,2.
	\end{align*}
\end{problem} 

For simplicity, we denote
$$
	u^\infty=u^\infty(\hat{x}, \omega),\quad
	v_j^\infty=v_j^\infty(\hat{x}, \omega).
$$
Then we derive from \eqref{eq: vj_far} that
\begin{align*}
	v_j^\infty=\Re u^\infty - c_j\Re\Phi_{\omega,j}^\infty + 
	\mathrm{i}\left(\Im u^\infty-c_j\Im\Phi_{\omega,j}^\infty\right),
\end{align*}
with
\begin{align*}
	&\Re\Phi_{\omega,j}^\infty=
	\begin{cases}
		T(\theta)\cos(k_-\hat{x}^t\cdot z_j),&z_j\in\mathbb{R}_-^n,\\
		H(\theta)\cos(k_+\hat{x}\cdot z_j^s)+\cos (k_+\hat{x}\cdot z_j),&z_j\in\mathbb{R}_+^n,
	\end{cases}\\
	&\Im\Phi_{\omega,j}^\infty=
	\begin{cases}
		-T(\theta)\sin(k_-\hat{x}^t\cdot z_j), & z_j\in\mathbb{R}_-^n, \\
		-H(\theta)\sin(k_+\hat{x}\cdot z_j^s)-\sin (k_+\hat{x}\cdot z_j), & z_j\in\mathbb{R}_+^n,
	\end{cases}
\end{align*}
Furthermore, we derive that
\begin{align}\label{eq: u_inf_norm}
	&\lvert u^\infty\rvert^2=\left(\Re u^\infty\right)^2+\left(\Im u^\infty\right)^2,\\
	&\lvert v_j^\infty\rvert^2=\left(\Re u^\infty-c_j\Re\Phi_{\omega,j}^\infty\right)^2+
	\left(\Im u^\infty-c_j\Im\Phi_{\omega,j}^\infty\right)^2.\label{eq: vj_inf_norm}
\end{align}
Subtracting \eqref{eq: u_inf_norm} from \eqref{eq: vj_inf_norm} gives that 
\begin{align}\label{eq: PR}
	\Re\Phi_{\omega,j}^\infty\Re u^\infty+\Im\Phi_{\omega,j}^\infty\Im u^\infty
	=f_j,\quad j=1,2,
\end{align}
with
\[
f_j=-\frac{1}{2c_j}\left(\lvert v_j^\infty\rvert^2-\lvert u^\infty\rvert^2-c_j^2\lvert\Phi_{\omega,j}^\infty\rvert^2\right).
\]

Once \eqref{eq: PR} is solvable, the phase information of $u^\infty$ can be retrieved correspondingly, which further leads to the phased far field:
\begin{align}
	u^\infty=\frac{\det D^R}{\det D}+\mathrm{i}\frac{\det D^I}{\det D},
\end{align}
with 
$$
	D=
	\begin{bmatrix}
		\Re\Phi_{\omega,1}^\infty&\Im\Phi_{\omega,1}^\infty\\
		\Re\Phi_{\omega,2}^\infty&\Im\Phi_{\omega,2}^\infty
	\end{bmatrix},\quad
	D^R=
	\begin{bmatrix}
		f_1&\Im\Phi_{\omega,1}^\infty\\
		f_2&\Im\Phi_{\omega,2}^\infty
	\end{bmatrix},\quad
	D^I=
	\begin{bmatrix}
		\Re\Phi_{\omega,1}^\infty&f_1\\
		\Re\Phi_{\omega,2}^\infty&f_2
	\end{bmatrix}.
$$
To ensure the unique solvability of \eqref{eq: PR}, the determination of $D$ should be non-zero, which can be fulfilled by choosing $\alpha_j,\,j=1,2,$ properly. Instead of heading for the mathematical argument to clarify the stability of the phase retrieval technique, we shall illustrate the performance of the proposed through several numerical examples. Nevertheless, the mathematical analysis to prove the stability of phase retrieval can be obtained similarly to that in \cite{Wang2023}.

By solving \eqref{eq: PR}, we can obtain the phase information of the radiated field. We can then employ the Fourier method developed in \Cref{sec: Fourier} to reconstruct the source function.


\section{Numerical examples}\label{sec: example}

In this section, we shall conduct several numerical experiments to verify the algorithms proposed in this paper. Both two- and three-dimensional cases are considered. 
The forward problem is solved by direct integration. The admissible wavenumber set in the lower half-space $\mathbb{R}_-^n $ is given by
$$
	\mathbb{K}_{-, N}=\left\{k: k=2\pi\lvert\vec{l}\rvert,\ \vec{l}\in\mathbb{L}_N\right\}\cup\left\{2\pi\lambda\right\},\ \lambda=10^{-3},
$$
with
$$
	\mathbb{L}_N=\left\{\vec{l}:\vec{l}=(l_1,l_2,\cdots,l_n)\in\mathbb{Z}^n,\ 1\le\lvert\vec{l}\rvert_\infty\le N,l_n>0,\ \theta_{\vec{l}}\in\Theta_c^n\right\},
$$
and $N$ is chosen to be 50 for convenience. Once $\mathbb{K}_{-,N}$ is chosen properly, the admissible wave frequency $\Omega_N$ and the admissible wave number $\mathbb{K}_{+,N}$ in $\mathbb{R}_+^n$ are set to be
\begin{align*}
	\Omega_N=\left\{\omega:\omega=k_-c_-,\ k_-\in\mathbb{K}_{-, N}\right\},\quad
	\mathbb{K}_{+,N}=\Omega_N/c_+.
\end{align*}

In the following examples, the wave speed $c_\pm$ is taken to be $c_-=2,$ $c_+=c_--\pi/1000.$ With the admissible wave number set chosen above, we obtain the synthesis far-field data from \eqref{eq: uinf} for $\text{supp} f\subset V_0$. We use the Gauss quadrature to calculate the volume integrals over the $100\times 100$ or $50^3$ Gauss-Legendre points for $V_0\subset\mathbb{R}^n,\,n=2,3.$ Specifically, $V_0$ is chosen to be $[-0.5,0.5]^{n-1}\times[-0.5,0]$.

To better exhibit the reconstruction, we divide this section into two subsections. In \Cref{sub1}, we show the performance of the phase retrieval technique, both in two- and three-dimensional space. In \Cref{sub2}, we shall verify the effectiveness of the Fourier method.
The exact source functions are chosen to be 
\begin{align*}
S_{\rm 2D}(x)&=1.1\exp\left(-200((x_1-0.01)^2+(x_2+0.38)^2)\right)\\
&\quad-100\left((x_2+0.5)^2-x_1^2\right)\exp\left(-90\left(x_1^2+(x_2+0.5)^2\right)\right)
\end{align*}
for $n=2,$ and
\begin{align*}
	S_{\rm 3D}(x)&=1.1\exp\left(-200((x_1-0.01)^2+(x_2-0.12)^2+(x_3+0.5)^2)\right)\\
	&\quad -100\left(x_2^2-x_1^2\right)\exp\left(-90\left(x_1^2+x_2^2+(x_3+0.5)^2\right)\right)
\end{align*}
for $n=3.$


\subsection{Phase retrieval}\label{sub1}
In this subsection, we shall test the performance of the phase retrieval technique developed in \Cref{sec: PR}.
We consider two cases: In the first case, all the artificial source points are located in $\mathbb{R}_-^n,$ while in the second case, all the artificial source points are located in $\mathbb{R}_+^n.$
As described before, the choice of the parameter $\alpha_j,\,j=1,2$ and the reference source points $z_j,\,j=1,2$ is of vital significance. In our experiment, we take $\alpha_1$ to be 
$\alpha_1=1/2.$ For $k_-\in\mathbb{K}_{-,N}\backslash\left\{k_{-,\vec{0}}\right\},$ we take $\alpha_2=1/2$ and $-4$ for $k=k_{-,\vec{0}}.$

Further, to illustrate the stability of the numerical method, the noisy phaseless far-field data $u^{\infty,\epsilon}$ takes the form
$$
	\lvert u^{\infty,\epsilon}\rvert:=(1+\epsilon  r)\lvert u^\infty\rvert,
$$
where $r\in[-1,1]$ is a uniform random number, $\epsilon>0$ is the noise level, and 
\[
\{u^\infty=u^\infty(\hat{x}_j, \omega_j):\omega_j\in\Omega_N,\ j=1,2,\cdots,(2N+1)^n\}.
\]

To evaluate the accuracy of the phase retrieval technique quantitatively, we compute the relative $L^2$- and $L^\infty$-errors between the exact far-field data and the retrieval data for $n=2$, which are respectively defined by:
\begin{align*}
	\text{Err}_{L^2}&=\frac{\left(\sum\limits_{\stackrel{1\le\lvert\vec{l}\rvert_\infty\le N}{\theta_{\vec{l}}\,\in\Theta_c^2}}\lvert u^\infty(\hat{x}_{\vec{l}}, \omega_{\vec{l}})-u^{\infty,\epsilon}(\hat{x}_{\vec{l}}, \omega_{\vec{l}})\rvert^2\right)^{1/2}}
	{\left(\sum\limits_{\stackrel{1\le\lvert\vec{l}\rvert_\infty\le N}{\theta_{\vec{l}}\in\Theta_c^2}}\lvert u^\infty(\hat{x}_{\vec{l}}, \omega_{\vec{l}})\rvert^2\right)^{1/2}},\\
	\text{Err}_\infty & =\frac{\max\limits_{\stackrel{1\le\lvert\vec{l}\rvert_\infty\le N}{\theta_{\vec{l}}\,\in\Theta_c^2}}\lvert u^\infty(\hat{x}_{\vec{l}}, \omega_{\vec{l}})-u^{\infty,\epsilon}(\hat{x}_{\vec{l}}, \omega_{\vec{l}})\rvert}{\max\limits_{\stackrel{1\le\lvert\vec{l}\rvert_\infty\le N}{\theta_{\vec{l}}\,\in\Theta_c^2}}\lvert u^\infty(\hat{x}_{\vec{l}}, \omega_{\vec{l}})\rvert}.
\end{align*}

By taking different noise levels $\epsilon, $ we compute the relative errors. In \Cref{tab: error2D}--\Cref{tab: error2U}, we respectively exhibit the relative errors where the reference points are located in $\mathbb{R}_-^2,\,\mathbb{R}_+^2$, which illustrates that the phase retrieval technique developed here exhibits a superior ability in recovering the phase information, no matter whether the reference source points are located above or below the flat plane $\Gamma_0$. Especially for the case where no noise is involved in the observed data, the $L^2/L^\infty-$ error of the phase retrieval is surprisingly low to the machine precision. Even under noisy data, the error of phase retrieval can be controlled less than the noise level. We want to point out that the relative errors computed here are the sum of the relative errors for all frequencies. Specifically, we take $N=50$ here, illustrating that the errors computed in \Cref{tab: error2D}--\Cref{tab: error2U} is the sum of the errors corresponding to 5065 wave numbers.

\begin{table}[htp]
  \centering
	\caption{The relative errors $\text{Err}_{L^2}$ and $\text{Err}_\infty$ with the reference source points located in $\mathbb{R}_-^2$.}\label{tab: error2D}
	\begin{tabular}{ccccccc}
		\toprule
		$\epsilon$&$0$&$0.5\%$&$1\%$&$2\%$&$5\%$&$10\%$\\
		\midrule
		$\text{Err}_{L^2}$&$1.69\times10^{-16}$&$0.32\%$&$0.68\%$&$1.44\%$&$3.71\%$&$7.17\%$\\
		$\text{Err}_\infty$&$4.53\times10^{-16}$&$0.39\%$&$0.84\%$&$2.13\%$&$6.11\%$&$13.34\%$\\
		\bottomrule
	\end{tabular}
\end{table}

\begin{table}[htp]
   \centering
	\caption{The relative errors $\text{Err}_{L^2}$ and $\text{Err}_\infty$ with the reference source points located in $\mathbb{R}_+^2$.}\label{tab: error2U}
	\begin{tabular}{ccccccc}
		\toprule
		$\epsilon$&$0$&$0.5\%$&$1\%$&$2\%$&$5\%$&$10\%$\\
		\midrule
		$\text{Err}_{L^2}$&$3.07\times10^{-16}$&$0.36\%$&$0.78\%$&$1.57\%$&$4.28\%$&$6.81\%$\\
		$\text{Err}_\infty$&$4.81\times10^{-16}$&$0.58\%$&$1.11\%$&$1.92\%$&$5.41\%$&$11.66\%$\\
		\bottomrule
	\end{tabular}
\end{table}

Next, we compute the reconstruction errors of the phase information for $n=3.$ To reduce the computational cost, we define the error corresponding to Fourier index ${\vec{l}}$ as follows:
\begin{align*}
	\text{Err}(\omega_{\vec{l}})&=\frac{\lvert u^\infty(\hat{x}_{\vec{l}}, \omega_{\vec{l}})-u^{\infty,\epsilon}(\hat{x}_{\vec{l}}, \omega_{\vec{l}})\rvert}
	{|u^\infty(\hat{x}_{\vec{l}}, \omega_{\vec{l}})|},
\end{align*}
and further exhibit the error subject to different $\vec{l}=(l_1, l_2, l_3)$ in \Cref{tab: error3D}--\Cref{tab: error3U}.
All the numerical results for $n=2$ and $n=3$ indicate that our phase retrieval technique recovers the phase information with high precision. Taking the ill-posedness of the inverse source problem into account, it can be seen that the error caused by the phase retrieval has an almost negligible effect on the final reconstruction. 

\begin{table}[htp]\centering
\caption{The relative errors $\text{Err}(\omega_{\vec{l}})$ with the reference source points located in $\mathbb{R}_-^3$.}\label{tab: error3D}
\begin{tabular}{ccccccc}
	\toprule
	 &$0$&$0.5\%$&$1\%$&$2\%$&$5\%$&$10\%$\\
	\midrule
	$(-2,0,1)$&$2.00\times10^{-16}$&$0.46\%$&$0.85\%$&$1.36\%$&$5.84\%$&$8.10\%$\\
	$(1,0,3)$&$6.25\times10^{-17}$&$0.34\%$&$0.98\%$&$0.34\%$&$3.68\%$&$4.32\%$\\
	$(17,-13,0)$&$8.58\times10^{-17}$&$0.70\%$&$1.29\%$&$2.83\%$&$2.27\%$&$7.84\%$\\
	$(-27,9,14)$&$1.35\times10^{-15}$&$0.42\%$&$0.58\%$&$1.82\%$&$1.55\%$&$6.78\%$\\
	$(-30,-10,23)$&$4.47\times10^{-16}$&$0.49\%$&$0.72\%$&$1.97\%$&$4.74\%$&$6.19\%$\\
	\bottomrule
\end{tabular}
\end{table}

\begin{table}[htp]
\centering
\caption{The relative errors $\text{Err}(\omega_{\vec{l}})$ with the reference source points located in $\mathbb{R}_+^3$.}\label{tab: error3U}
\begin{tabular}{ccccccc}
	\toprule
	&$0$&$0.5\%$&$1\%$&$2\%$&$5\%$&$10\%$\\
	\midrule
	$(-2,0,1)$&$2.00\times10^{-16}$&$0.35\%$&$0.96\%$&$1.23\%$&$6.67\%$&$8.05\%$\\
	$(1,0,3)$&$6.24\times10^{-17}$&$0.46\%$&$0.82\%$&$1.31\%$&$1.96\%$&$3.79\%$\\
	$(17,-13,0)$&$8.58\times10^{-17}$&$0.11\%$&$0.41\%$&$1.83\%$&$5.04\%$&$8.60\%$\\
	$(-27,9,14)$&$1.35\times10^{-15}$&$0.18\%$&$0.32\%$&$1.55\%$&$1.64\%$&$3.25\%$\\
	$(-30,-10, 23)$&$4.47\times10^{-16}$&$0.26\%$&$0.52\%$&$2.52\%$&$2.09\%$&$7.99\%$\\
	\bottomrule
\end{tabular}
\end{table}

\subsection{Source reconstruction}\label{sub2}
In this subsection, we focus on recovering the source from the recovered phased radiated data by utilizing the Fourier method proposed in \Cref{sec: Fourier}. 
In \Cref{fig: 2D}, we plot the exact source functions $S_{\rm 2D}$ and its reconstruction. We can see from \Cref{fig: 2D} that the rough contour of the source function can be captured, while the value may not be matched well. A reason accounting for this is that the observation aperture is limited (less than $\pi$), and there is no sufficient information available to produce a perfect reconstruction. As expected, the reconstruction can be improved largely if we only extend the domain of definition for transmitted direction by removing the restriction that $\theta_{\vec{l}}\in\Theta_c^n$ from \eqref{eq: XN}. We refer to \Cref{fig: 2D}(c) for this case.  

Collecting the above observation, we find that the Fourier method is capable of recovering the source function $S_{\rm 2D}$ from the phaseless radiated field. Under the two-layered medium background, where the observation aperture is limited, the accuracy of the reconstruction should be considered to be acceptable. 

\begin{figure}[htpb]
	\centering
	\subfigure[]{\includegraphics[width=0.3\linewidth]{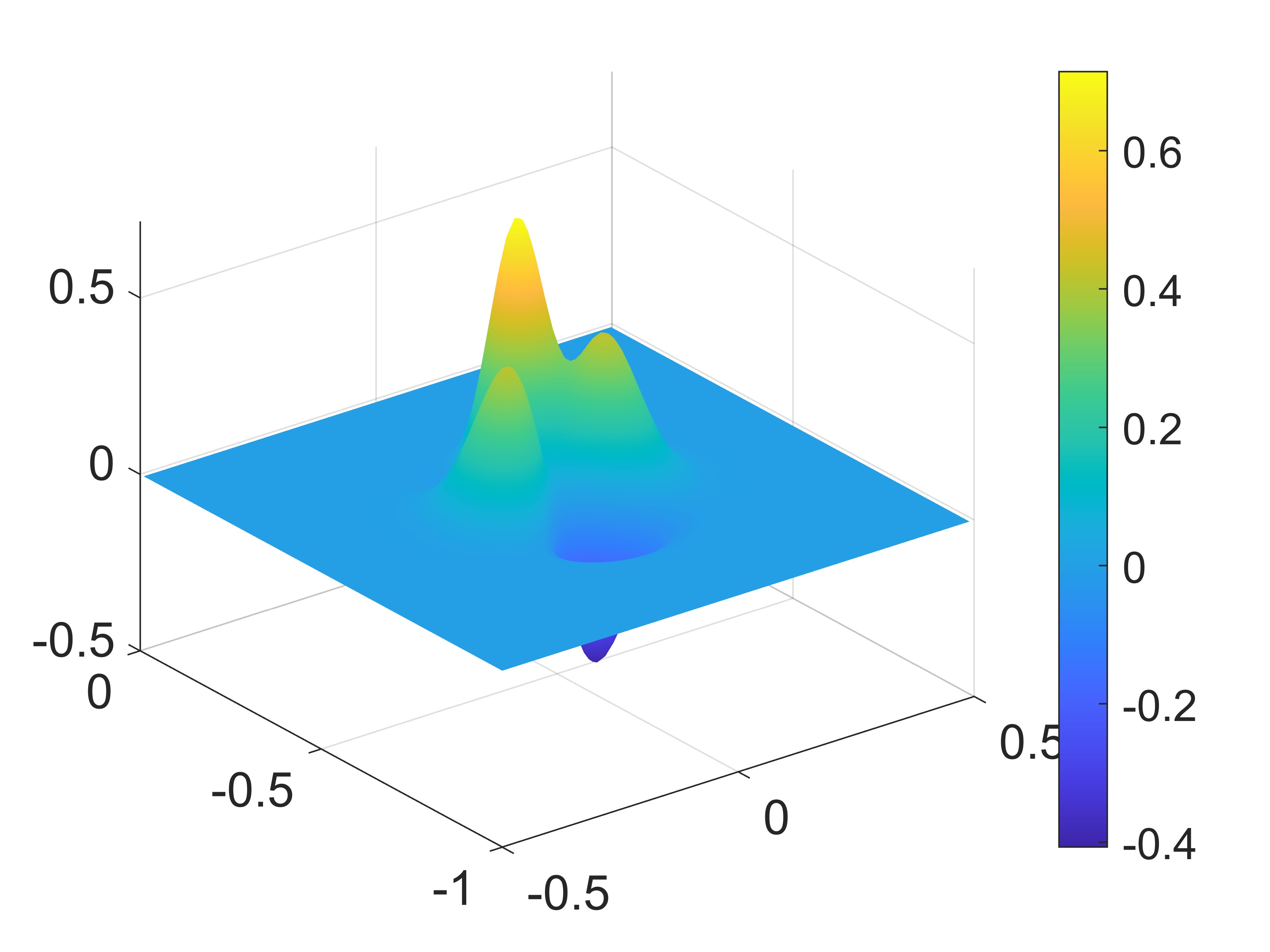}}
	\subfigure[]{\includegraphics[width=0.3\linewidth]{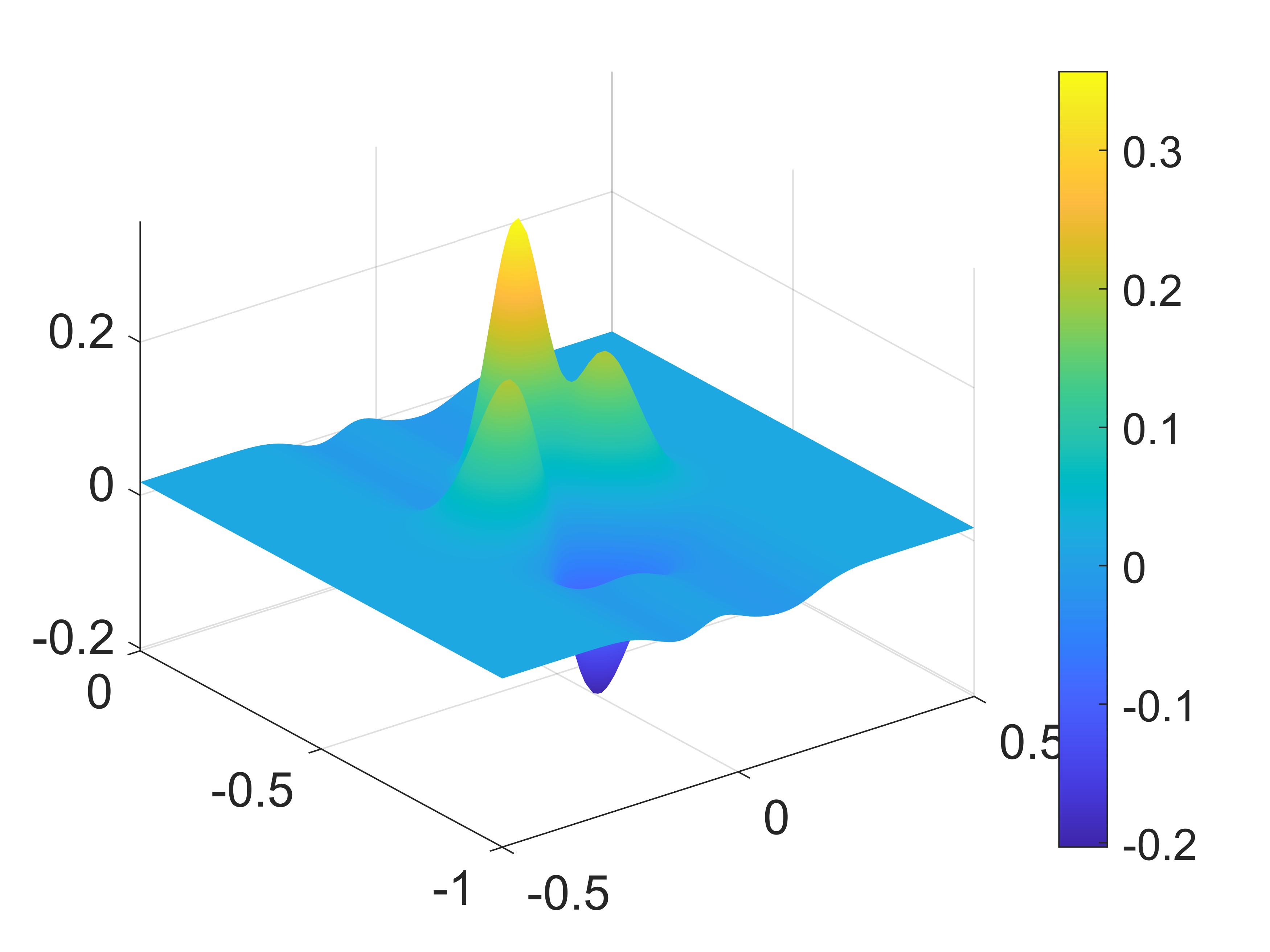}}
	\subfigure[]{\includegraphics[width=0.3\linewidth]{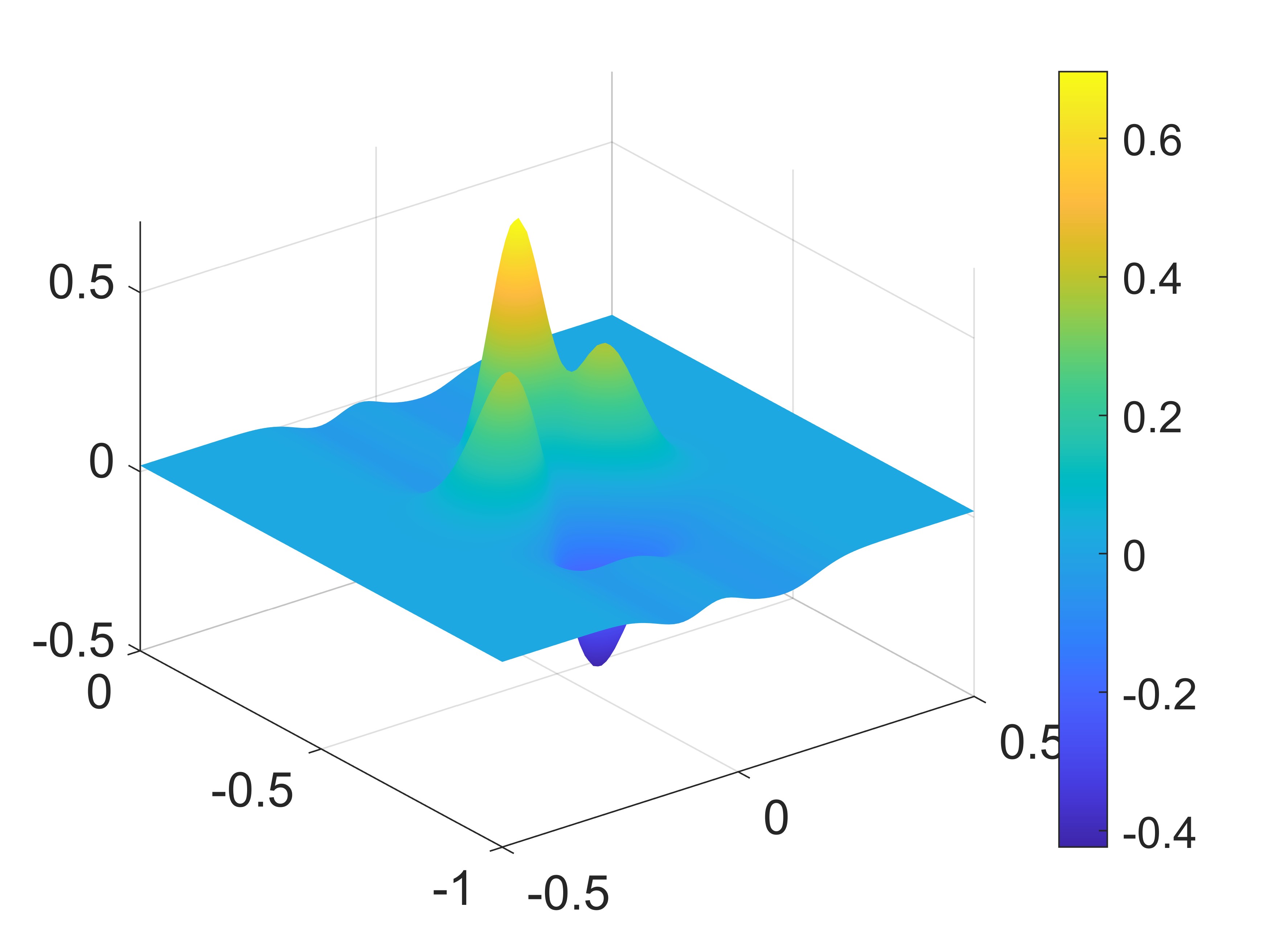}}
	\caption{The exact source function and its reconstruction. (a) Surface plot (b) Reconstruction (c) Reconstruction by removing the restriction that $\theta_{\vec{l}}\in\Theta_c^n$ from \eqref{eq: XN}}\label{fig: 2D}
\end{figure}

As the last numerical experiment, we consider the reconstruction of the source function $S_{\rm 3D}(x)$ from the recovered phased radiated field.
In \Cref{fig: 3D}, we plot the exact and the reconstructed source functions at three cross-sections: $x_1=0.01,\,x_2=0.12,\,x_3=-0.5$, respectively.
One can see from \Cref{fig: 3D} that our method can roughly approximate the source function while the accuracy seems to be not so satisfactory, which is a result of the fact that the observation data is only available on the limited aperture.

\begin{figure}[htpb]
	\centering
	\subfigure[]{\includegraphics[width=0.45\linewidth]{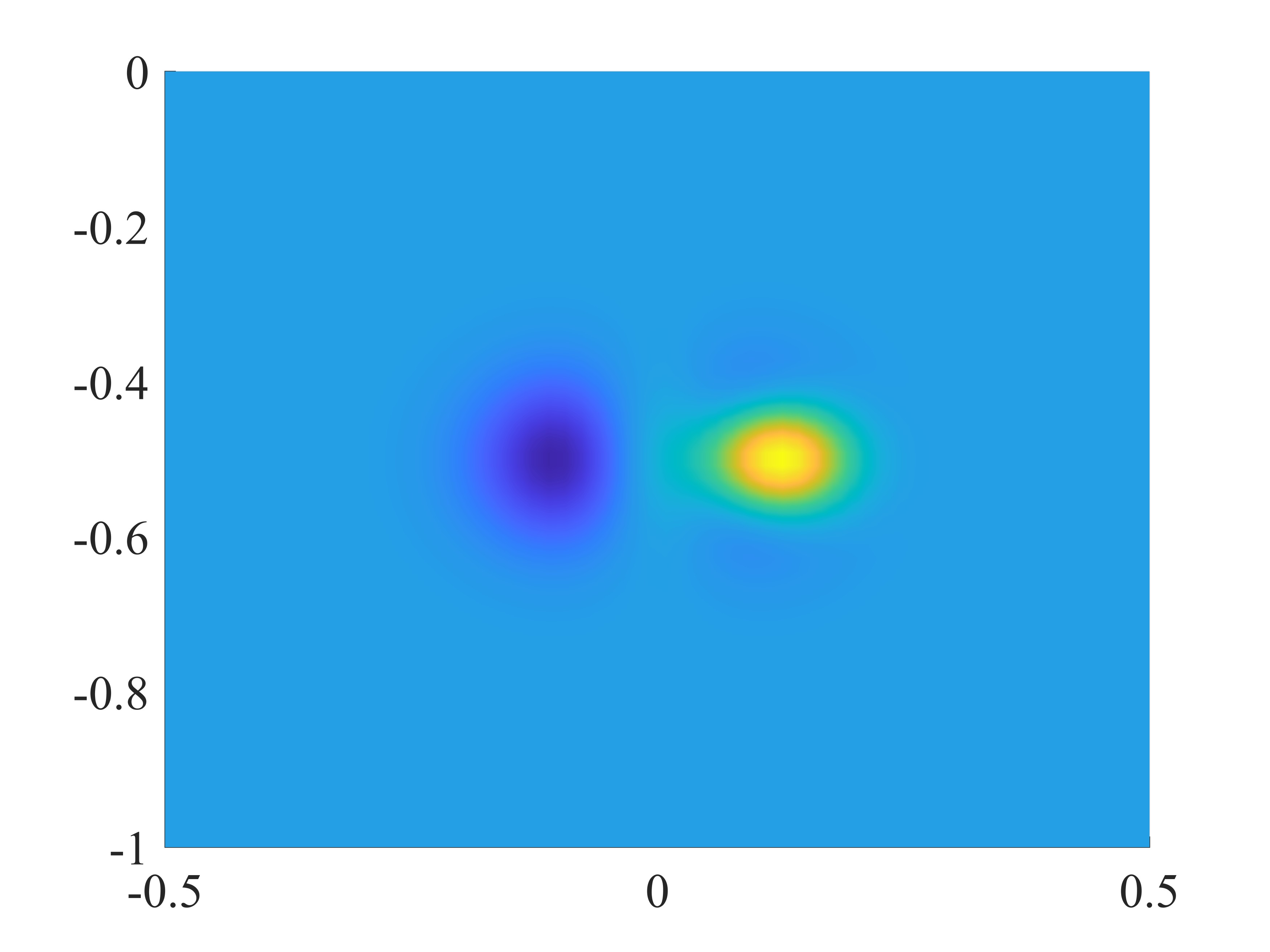}}
	\subfigure[]{\includegraphics[width=0.45\linewidth]{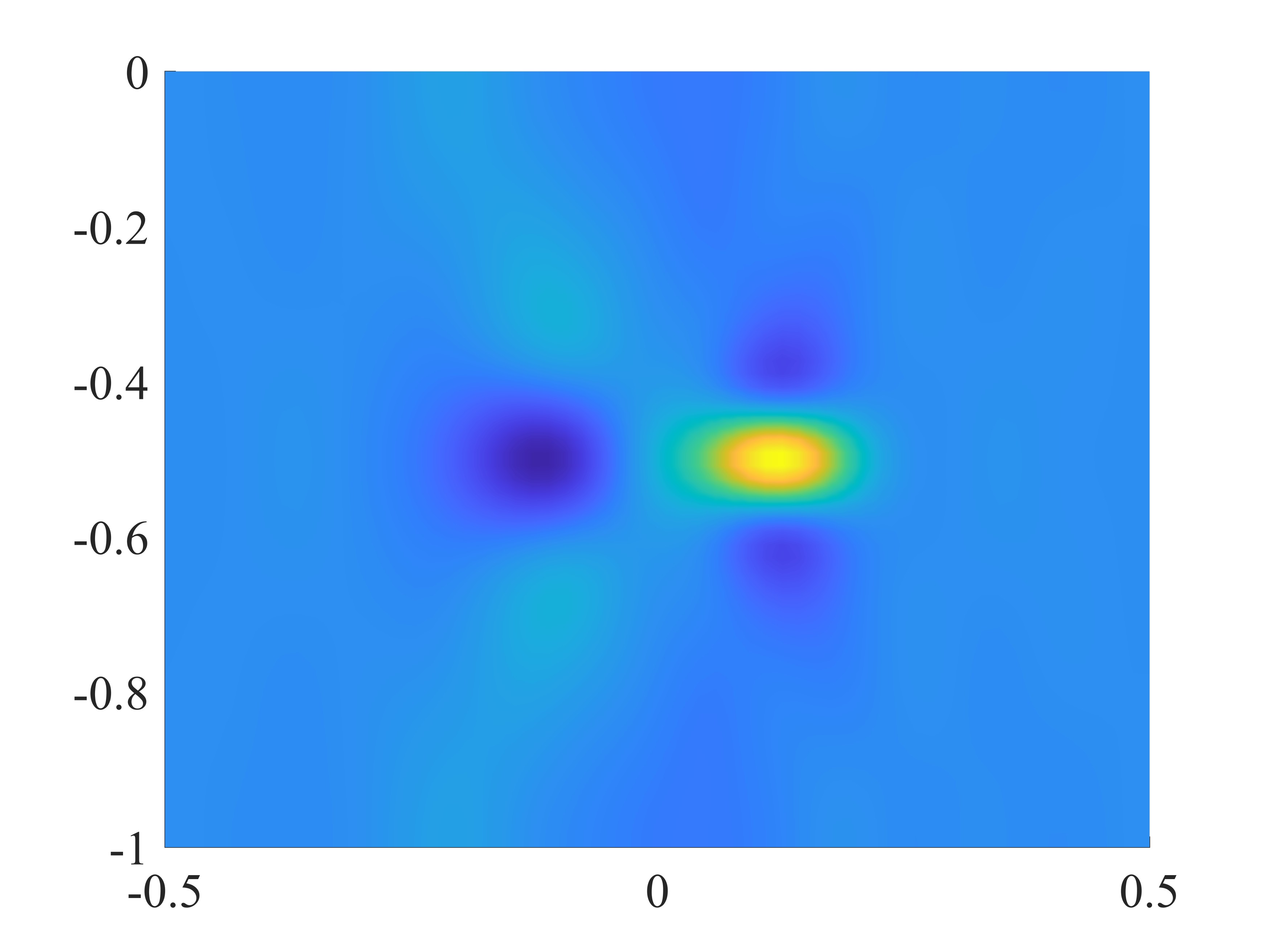}}\\
	\subfigure[]{\includegraphics[width=0.45\linewidth]{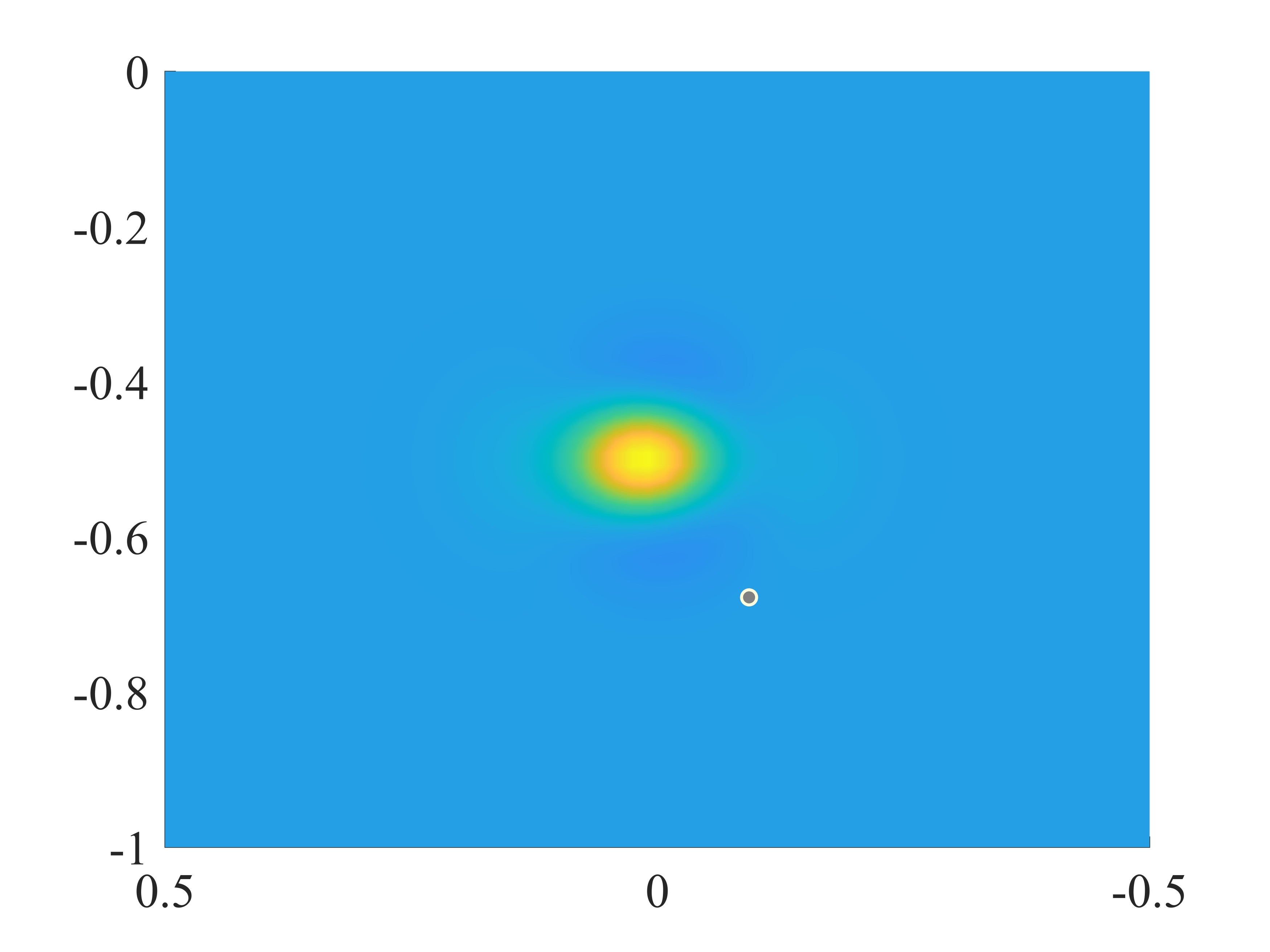}}
	\subfigure[]{\includegraphics[width=0.45\linewidth]{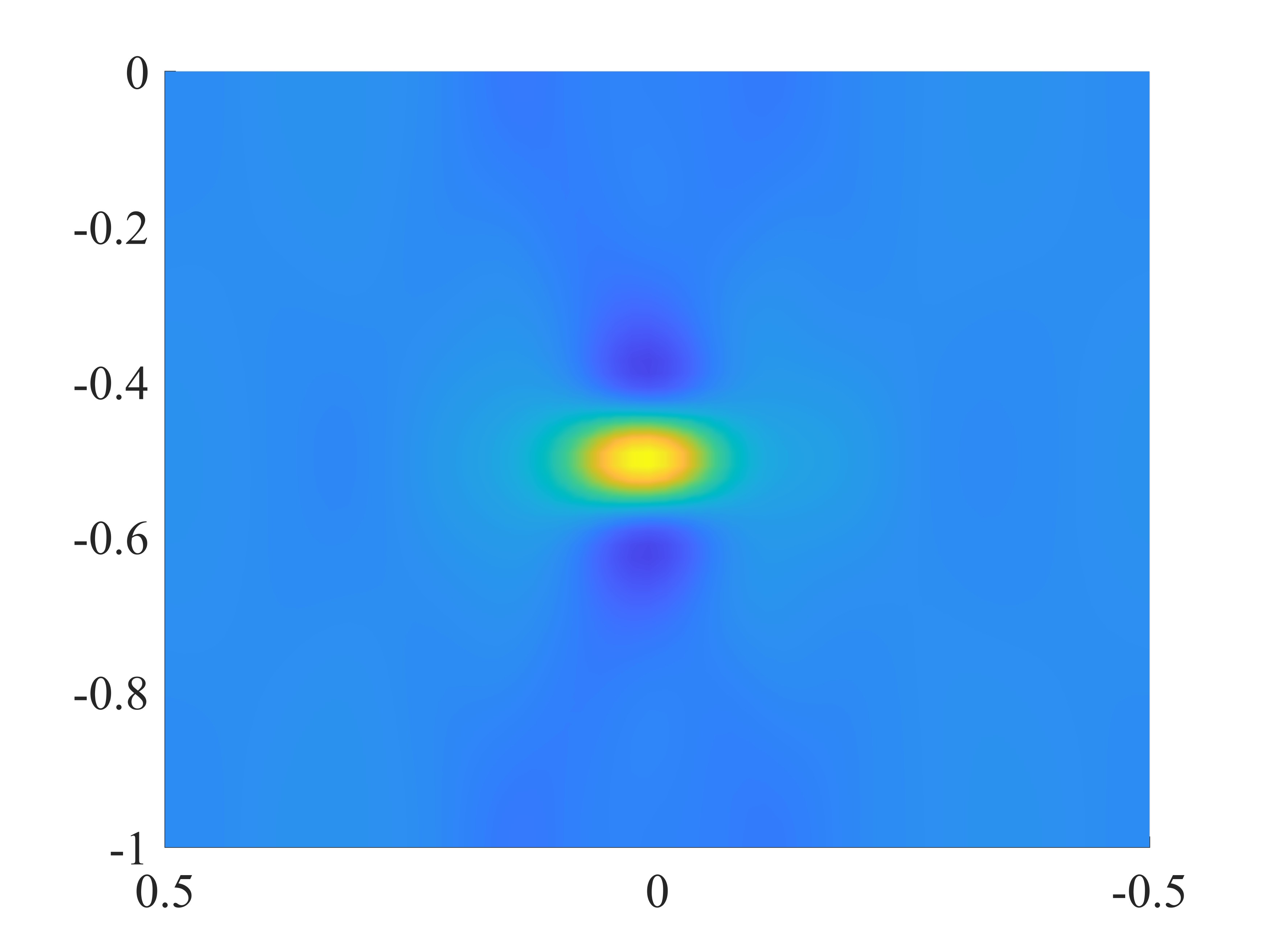}}\\
	\subfigure[]{\includegraphics[width=0.45\linewidth]{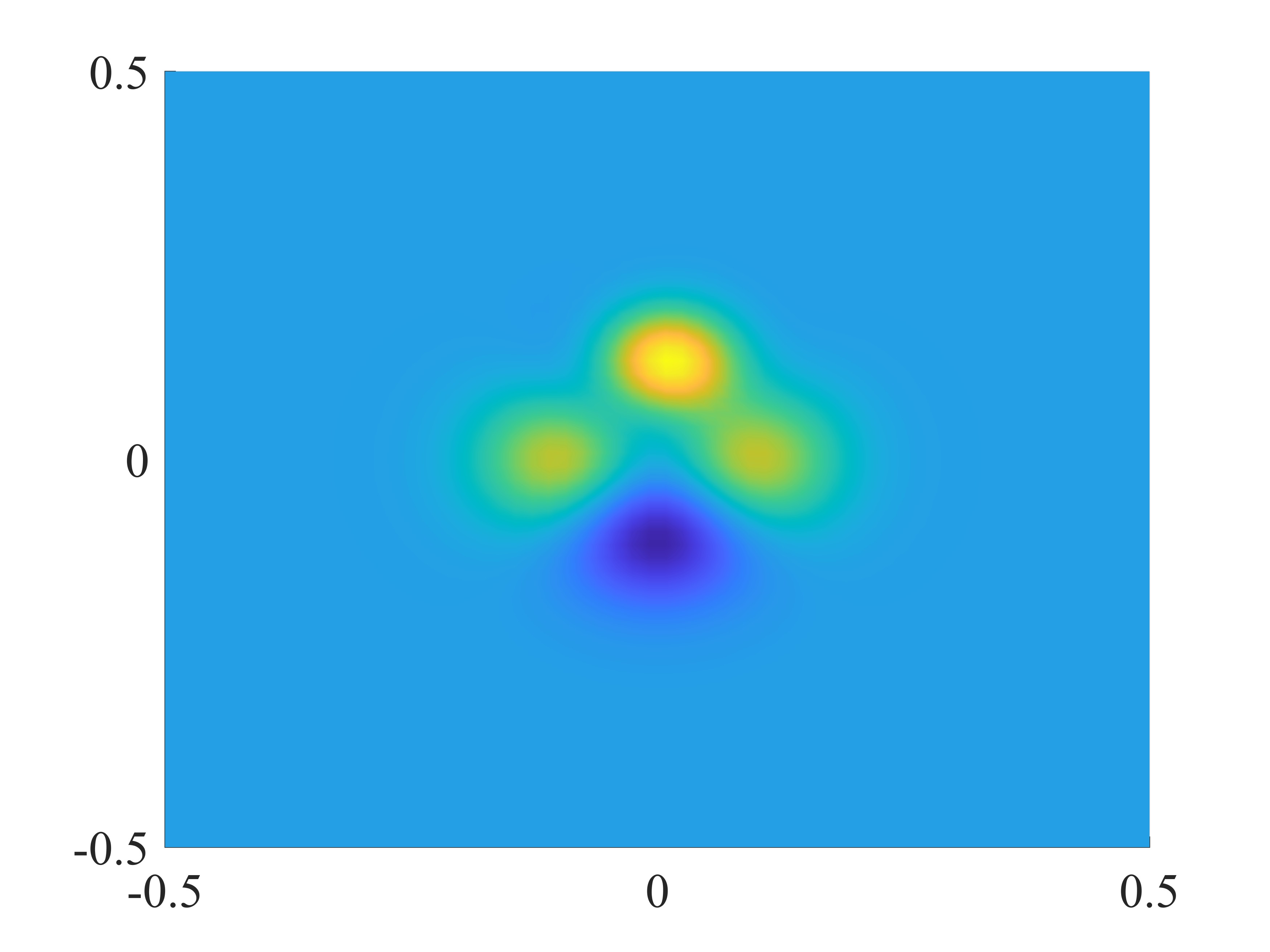}}
	\subfigure[]{\includegraphics[width=0.45\linewidth]{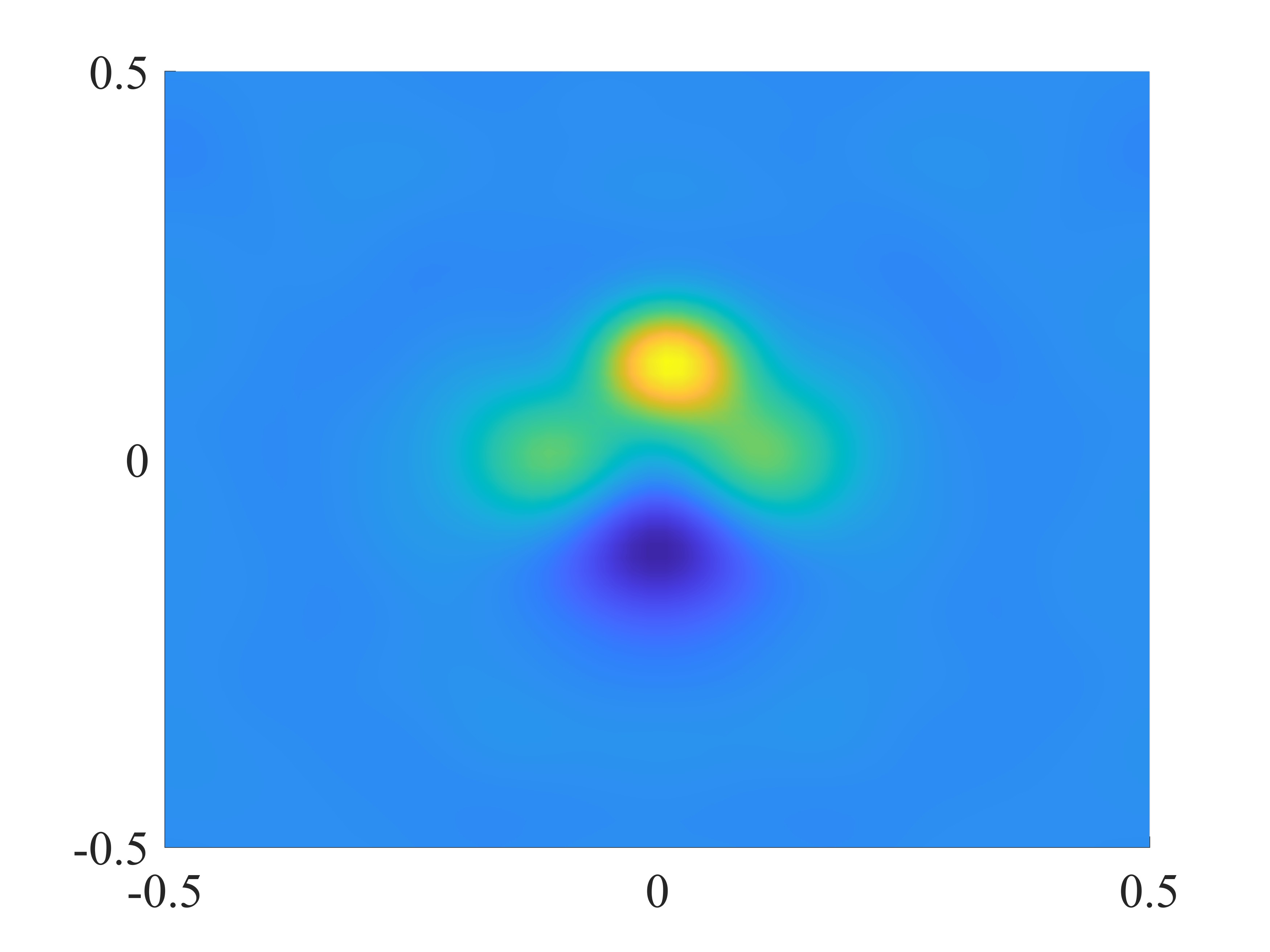}}
	\caption{The exact source function and its reconstruction in 3D. Column 1: exact sources; Column 2: reconstructions; Row 1: $x_1=0.01,$ Row 2: $x_2=0.12,$ Row 3: $x_3=-0.5.$}\label{fig: 3D}
\end{figure}

At the end of this section, we want to remark that, the retrieval technique developed can well recover the phase information from the phaseless far-field data. The Fourier method performs well in identifying the source function to some extent. Nevertheless, the accuracy seems to be a bit low due to the limited observation aperture, which is a challenge due to the physical model under consideration.

\section{Conclusion}\label{sec: conclusion}

In this paper, we develop a numerical method based on the Fourier expansion to determine the source function in the two-layered medium from multi-frequency far-field patterns. For the phased measured data, we extend the Fourier method developed in \cite{Wang2017} to the inverse source problem in the two-layered medium problem. For the phaseless data, we develop a phase retrieval technique by introducing several auxiliary source points to the inversion model. Numerical examples in two- and three-dimensions are conducted to verify the performance of the proposed method. The phase information can be well recovered by the phase retrieval technique and the Fourier method can accurately reconstruct the source function and no forward solver is involved.

Improvements in the experimental results can be further investigated, especially the quality enhancement under limited observation aperture. In addition, it would be interesting to investigate the case where the background medium has more than two layers. We hope to report developments in these directions in future works.





\bibliographystyle{plain}
\bibliography{references}

\begin{thebibliography}{10}

\bibitem{gongcheng2}
{A. Margues, J. Cabrera, and C. Malfatti}.
\newblock Printed circuit boards: A review on the perspective of
  sustainability.
\newblock {\em J. Environ. Manage}, 131:298--306, 2013.

\bibitem{gongcheng1}
{D. Buckley and P. Pasquali}.
\newblock {\em Textbook of Primary Care Dermatology}.
\newblock Springer, 1st edition, 2021.

\bibitem{CK18}
{D. Colton and R. Kress}.
\newblock Looking back on inverse scattering theory.
\newblock {\em SIAM Review}, 60:779--807, 2018.

\bibitem{CK19}
{D. Colton and R. Kress}.
\newblock {\em Inverse Acoustic and Electromagnetic Scattering Theory}.
\newblock Springer-Nature, 2019.

\bibitem{IP15}
{D. Zhang and Y. Guo}.
\newblock {Fourier method for solving the multi-frequency inverse source
  problem for the Helmholtz equation}.
\newblock {\em Inverse Problems}, 31:035007, 2015.

\bibitem{ZGLL18}
{D. Zhang, Y. Guo, J. Li, and H. Liu}.
\newblock Retrieval of acoustic sources from multi-frequency phaseless data.
\newblock {\em Inverse Problems}, 34:094001, 2018.

\bibitem{Wang2023}
{F. Sun and X. Wang}.
\newblock Reconstruction of acoustic sources from multi-frequency phaseless
  far-field data.
\newblock {\em J. Inverse Ill-Posed Probl.}, 31:177--189, 2023.

\bibitem{Bao2015}
{G. Bao, P. Li, J. Lin, and F. Triki}.
\newblock Inverse scattering problems with multi-frequencies.
\newblock {\em Inverse Problems}, 31:093001, 2015.

\bibitem{HXYZ23}
{G. Hu, X. Xu, X. Yuan, and Y. Zhao}.
\newblock Stability for the inverse source problem in a two-layered medium
  separated by rough interface.
\newblock {\em Inverse Probl. Imaing.}, 2024.

\bibitem{Liu17}
{H. Ammari, E. Iakovleva, and D. Lesselier}.
\newblock {A MUSIC algorithm for locating small inclusions buried in a half
  space from the scattering amplitude at a fixed frequency}.
\newblock {\em Multiscale Modeling and Simulation}, 3:597--628, 2005.

\bibitem{LLLL}
{J. Li, P. Li, H. Liu, and X. Liu}.
\newblock Recovering multiscale buried anomalies in a two-layered medium.
\newblock {\em Inverse Problems}, 31:105006, 2015.

\bibitem{YL18}
{J. Yang and K. Liu}.
\newblock Detecting buried wave-penetrable scatterer in a two-layered medium.
\newblock {\em J. Comput. Appl. Math.}, 220:318--329, 2018.

\bibitem{gongcheng3}
{K. Kwon, M. Rahman, T. Phung, S. Hoath, S. Jeong, and J. Kim}.
\newblock Review of digital printing technologies for electronic materials.
\newblock {\em Flexible and Printed Electronics}, 5:043003, 2020.

\bibitem{Liu6}
{M. Brekhovskikh and A. Godin}.
\newblock {\em Acoustics of Layered Media I}.
\newblock Springer, Berlin, 1990.

\bibitem{PA17}
C.~P\'{e}rez-Arancibia.
\newblock {\em Windowed integral equation methods for problems of scattering by
  defects and obstacles in layered media}.
\newblock PhD thesis, California Institute of Technology, 2017.

\bibitem{LZ10}
{X. Liu and B. Zhang}.
\newblock A uniqueness result for inverse electromagnetic scattering problem in
  a two-layered medium.
\newblock {\em Inverse Problems}, 26:105007, 2010.

\bibitem{LS}
{X. Liu and Q. Shi}.
\newblock Identification of acoustic point sources in a two-layered medium from
  multi-frequency sparse far field patterns.
\newblock {\em Inverse Problems}, 39:065001, 2023.

\bibitem{Wang2017}
{X. Wang, Y. Guo, D. Zhang, and H. Liu}.
\newblock Fourier method for recovering acoustic sources from multi-frequency
  far-field data.
\newblock {\em Inverse Problems}, 33:035001, 2017.

\bibitem{CGZ24}
{Y. Chang, Y. Guo, and Y. Zhao}.
\newblock Inverse source problem of the biharmonic equation from multifrequency
  phaseless data.
\newblock {\em SIAM J. Sci. Comput.}, 46(5):{A2799--A2818}, 2024.

\bibitem{CGYZ24}
{Y. Chang, Y. Guo, T. Yin, and Y. Zhao}.
\newblock Analysis and computation of an inverse source problem for the
  biharmonic wave equation.
\newblock {\em Inverse Problems}, 40:115011, 2024.

\bibitem{ZX}
{Y. Zhang and X. Xu}.
\newblock Stability of a one-dimensional inverse source scattering problem in a
  multi-layered medium.
\newblock {\em Inverse Problems and Imaging}, 17:1113--1138, 2023.

\bibitem{ZX2}
{Y. Zhang and X. Xu}.
\newblock Stability of inverse random source scattering problems in a
  multi-layered medium.
\newblock {\em Discrete and Continuous Dynamical Systems - B}, 28:5729--5754,
  2023.

\end{thebibliography}
\end{document}